
\documentclass[11pt]{article}

\usepackage{tikz}
\usepackage{subfigure}
\usepackage{scalefnt}
\usepackage{bm}

\usepackage{amsmath,amsfonts,amssymb,amsthm,enumerate,graphicx}

\usepackage{hyperref}

\usepackage{tikz}
\usetikzlibrary{arrows,backgrounds}
\usetikzlibrary{snakes}
\usetikzlibrary{automata}
\usetikzlibrary{er}
\usetikzlibrary{folding}
\usetikzlibrary{matrix}
\usetikzlibrary{mindmap}
\usetikzlibrary{petri}
\usetikzlibrary{plothandlers}
\usetikzlibrary{plotmarks}
\usetikzlibrary{shapes}
\usetikzlibrary{shapes.symbols}
\usetikzlibrary{shapes.multipart}
\usetikzlibrary{shapes.misc}
\usetikzlibrary{topaths}
\usetikzlibrary{trees}


\numberwithin{equation}{section}

\textheight9in \textwidth6in

\hoffset-0.55in \voffset-0.8in



\newtheorem{theo}{{\bf Theorem}}

\newtheorem{prop}{{\bf Proposition}}[section]
\newtheorem{lemma}[prop]{{\bf Lemma}}
\newtheorem{claim}{{\bf Claim}}

\usepackage{mathrsfs}
\usepackage{color}
\usepackage{graphics}
\usepackage{epsfig}
\usepackage{tikz,authblk} 

\definecolor{DRed}{rgb}{0.5,0.05,0.05}
\definecolor{DGreen}{rgb}{0.05,0.5,0.05}
\definecolor{Melon}{rgb}{0.7,0.1,0}

\definecolor{DGrey}{rgb}{0.3,0.3,0.3}
\definecolor{PBlue}{rgb}{0.45,0.65,1.0}
\definecolor{SBlue}{rgb}{0.33,0.49,0.75}
\definecolor{DBlue}{rgb}{0.05,0.05,0.50}
\definecolor{PrBlue}{rgb}{0,0.23,0.41}

\definecolor{Wine}{rgb}{0.5,0,0.05}
\definecolor{PrBlueTx}{rgb}{0,0.20,0.47}
\definecolor{HeadBlu}{rgb}{0,0.22,0.44}

\begin{document}

\title{Isoperimetric inequality for non-Euclidean polygons}

\author[1] {Basudeb Datta}
\author[2] {Subhojoy Gupta}

\affil[1]{Institute for Advancing Intelligence (IAI), TCG CREST,  Bidhannagar, Kolkata 700\,091, India.
 bdatta17@gmail.com.}
\affil[2]{Department of Mathematics, Indian Institute of Science, Bangalore 560\,012, India. 
subhojoy@iisc.ac.in.}
\date{August 27, 2024}

\maketitle

\vspace{-7.5mm}

\begin{abstract}
It is a classical fact in Euclidean geometry that the regular polygon maximizes area amongst polygons of the same perimeter and number of sides, and the analogue of this in non-Euclidean geometries has long been a folklore result. 
In this note, we present a complete proof of this  polygonal isoperimetric inequality in hyperbolic and spherical geometries.
\end{abstract}

\noindent {\small {\em MSC 2020\,:}  52A10; 52A38; 52A55. 

\noindent {\em Keywords:} Convex polygons, hyperbolic geometry, spherical geometry, isoperimetric inequalities.}

\section{Introduction}

In 1964, T\'{o}th provided a proof of the classical fact that  {\em among all $n$-gons of fixed perimeter in the Euclidean plane, the regular $n$-gon has the largest area} (cf. \cite[Page 160]{T}). In the same book (see Pages 213, 214) T\'{o}th showed (sketching the main steps of a proof) that among the spherical polygons of equal area having at most $n$ sides the
regular $n$-gon has the least possible perimeter. Therefore, {\em among all $n$-gons of fixed perimeter in an open hemisphere, the regular $n$-gon has the largest area}. 
He also stated (cf. \cite[Page 256]{T}) that this result on spherical polygons can be extended without difficulty to hyperbolic polygons. 
In \cite{B}, Bezdek gave a proof of this isoperimetric inequality in the Euclidean plane; he also stated (without proof) that the similar result is true for the hyperbolic plane. However, the proof for hyperbolic case seems does not follow by exactly the same arguments as Bezdek's. Our search for a complete proof of this non-Euclidean isoperimetric inequality in the literature yielded the following two more recent articles:
\begin{itemize}
\item  In \cite[Proposition 3.7]{HLPX} the proof assumes the fact that the circle is the unique curve realizing the equality in the isoperimetric inequality, and 
\item In  the preprint  \cite{AC}  a proof  is provided for all constant curvature space forms,  based on the idea in \cite[section 3]{HHM}.
\end{itemize} 
Our goal in this note is to remedy this gap in the literature, and provide a new and  detailed  proof of this isoperimetric inequality for hyperbolic and spherical polygons. One feature of our proof that we would like to highlight is that the steps are elementary and should be easy to translate into a computer proof system.

Throughout this article, a hyperbolic polygon will have vertices in the hyperbolic plane, and a spherical polygon will have vertices in an open hemisphere. In both cases, the sides of a polygon are geodesic segments in the respective geometries, and a polygon with $n$ sides is called an $n$-gon. Also, a polygon is called \textit{regular} if the lengths of the sides are all equal (also called \textit{equilateral}), and the interior angles at each vertex are also all equal (also called \textit{equiangular}). In this note, we  present a proof of the following.

\begin{theo} \label{peri&area}
For $n\geq 3$, among all hyperbolic (respectively, spherical) $n$-gons  of fixed perimeter, the regular hyperbolic  (respectively, spherical) $n$-gon has the largest area. 
\end{theo}

Our proof follows a strategy similar to the  proof of the isoperimetric inequality for Euclidean polygons provided by  Mossinghoff  in \cite{M}. In particular, we shall rely on some results in a relatively recent article by Wimmer in \cite{W} that generalize a classical result concerning maximizing the area of quadrilaterals. We shall present these results, together with a brief exposition of the geometry of the hyperbolic plane and the sphere, in the next section. We remark that there is another classical isoperimetric problem that seeks the convex Euclidean $n$-gon of given perimeter that \textit{minimizes the diameter} (see \cite{D}); this has not been investigated in non-Euclidean geometries.

\bigskip

\noindent \textbf{Acknowledgements.}  This work was supported by the Department of Science and Technology, Govt.of India grant no. CRG/2022/001822, and by the DST FIST program - 2021 [TPN - 700661].


\section{A brief introduction to non-Euclidean geometries}

\subsection{Hyperbolic geometry}

A well-known model of the hyperbolic plane is the \textit{Poincar\'{e} disk model}, which is the open unit disk in $\mathbb{R}^2$ equipped with the Riemannian metric $4(dr^2 + r^2 d\theta^2)/(1-r^2)^2$ (in polar coordinates). The \textit{boundary at infinity} is the unit circle in this model. 

The Poincar\'{e} disk model is \textit{conformal} since the metric written above is conformal to the Euclidean metric; in particular, angles in the hyperbolic metric are the same as the usual angles in the Euclidean plane.


 
Geodesics between points are distance-minimizing paths; in the Poincar\'{e} disk these are segments of semi-circular arcs that are perpendicular to the boundary circle, including diameters.
Such geodesic segments will be the sides of the polygons that we consider in the hyperbolic plane.

A few more notions that will be relevant to this article are the following: 
\begin{itemize}

\item a \textit{circle} in the hyperbolic plane is the set of points at equal hyperbolic distance from a point (the \textit{hyperbolic center}); these are also Euclidean circles in the Poincar\'{e} disk, although the Euclidean centers and hyperbolic centers do not coincide except for circles centered at the origin. 
\item a \textit{horocycle} in the hyperbolic plane in these models is a circle that is tangent to the boundary at infinity. (See Fig. \ref{fig:hypocycle}.) 
\item a \textit{hypercycle} in the hyperbolic plane is a circular arc that intersects the boundary at infinity at both endpoints at an angle $\alpha$. When $\alpha = \pi/2$, such an arc is a geodesic line, and in general it is equidistant from the geodesic line with the same endpoints on the boundary at infinity. (See Fig. \ref{fig:hypercycle}.) 
\end{itemize}

The group of isometries of the hyperbolic plane is generated by reflections along geodesic lines, and any orientation-preserving isometry that is not the identity map falls in the following three types: an \textit{elliptic} isometry fixes a point in the hyperbolic plane  and preserves any circle centered at that point (rotating the circle), a \textit{parabolic} isometry fixes a point on the boundary at infinity, and preserves any horocycle centered at that point (translating along it), and a \textit{hyperbolic} isometry fixes a pair of distinct points on the boundary and preserves any hypercycle with those endpoints (translating along it).

 Note that in contrast with Euclidean geometry, the sum of interior angles of a hyperbolic triangle are strictly less than $\pi$. For more comparisons, and historical context, see for example \cite{A}.

\subsection{Spherical geometry}

The usual model for spherical geometry is the unit sphere in $\mathbb{R}^3$ equipped with the metric induced from the Euclidean metric in $\mathbb{R}^3$. Geodesics in this model are segments of great circles, which are circles that are intersections of the sphere with planes in $\mathbb{R}^3$ that pass through the origin. As mentioned in the introduction, the spherical polygons we shall consider, shall have vertices that lie in an open hemisphere;  the segment of the great circle in that hemisphere  between any pair of points is the unique distance-minimizing path between them.  Once again, reflections along great circles generate the group of isometries. As for the hyperbolic plane, a  \textit{circle} in spherical geometry is the set of points at equal distance from a point in the open hemisphere. 

As is well known, the sum of interior angles of a spherical triangle are strictly greater than $\pi$. However, in both the spherical and hyperbolic geometries, the following fact from Euclidean geometry still holds. We provide the (elementary) proof as not only shall we use the fact, we shall modify the proof to extend to the case of \textit{ideal} hyperbolic triangles in \S3. 

\begin{lemma}\label{lem1} The base angles of a (spherical or hyperbolic) isosceles triangle are equal.
\end{lemma}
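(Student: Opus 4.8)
The plan is to prove the base angles of an isosceles triangle are equal using the classical congruence argument (the "pons asinorum"), adapted to the non-Euclidean setting. The key observation is that the congruence axioms---in particular the side-angle-side (SAS) criterion---hold in both spherical and hyperbolic geometry, since in each case there is a rich isometry group acting transitively on flags (point, geodesic direction at that point, choice of side). Concretely, given two triangles with two sides and the included angle equal, one can find an isometry carrying one vertex and the two geodesic rays emanating from it to those of the other; because geodesics are uniquely determined by a point and a direction, this isometry carries one triangle onto the other, so all remaining sides and angles agree.

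First I would set up notation: let $\triangle ABC$ be an isosceles triangle with $\lvert AB\rvert = \lvert AC\rvert$, and let me show the base angles at $B$ and $C$ are equal. The cleanest approach is the reflection argument. The two sides $AB$ and $AC$ emanate from $A$ with equal length, so I would consider the isometry $\sigma$ that reflects the plane (hyperbolic or spherical) across the geodesic line that bisects the angle at $A$. Since reflections are isometries and this particular reflection fixes $A$ and swaps the two geodesic rays $AB$ and $AC$ (they make equal angles with the bisector), it must send $B$ to $C$ and $C$ to $B$, because $B$ is the point on ray $AB$ at distance $\lvert AB\rvert$ from $A$ and $C$ is the point on ray $AC$ at the equal distance $\lvert AC\rvert$. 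Consequently $\sigma$ maps the whole triangle to itself, interchanging the two base vertices, and since $\sigma$ preserves angles it must carry the interior angle at $B$ to the interior angle at $C$. Hence these angles are equal.

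The main point requiring care---and what I expect to be the chief obstacle---is justifying that the angle bisector at $A$ exists as a genuine geodesic line and that reflection across it is a well-defined isometry in both geometries. This is where I would invoke the structural facts stated earlier in the excerpt: the group of isometries in each geometry is generated by reflections along geodesic lines, so reflection across any geodesic is available as an isometry, and conformality (in the hyperbolic case) together with the homogeneity of the sphere guarantees that angles are genuinely preserved and that the bisecting geodesic is unique. I would also note the one subtlety in the spherical case: since the vertices lie in an open hemisphere, I must check that the reflected triangle and the bisector remain within the region where geodesics between the relevant points are unique distance-minimizers, so that the image of the triangle is again a bona fide spherical triangle; this follows because reflection across a great circle through $A$ preserves the hemisphere configuration determined by $A$, $B$, $C$. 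With these points secured, the equality of base angles follows immediately from the isometry-invariance of angle measure.
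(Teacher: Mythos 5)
Your proof is correct and follows essentially the same route as the paper: both arguments exhibit a reflection isometry that fixes the apex and interchanges the two base vertices (you realize it concretely as reflection in the angle bisector at the apex), and then conclude by the isometry-invariance of angles. The extra care you take over the existence of the bisecting geodesic and the open-hemisphere condition is fine but not a departure from the paper's argument.
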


\begin{proof}
Let $uvw$ be the triangle, such that the lengths of sides $uv$ and $vw$ are equal. Then, in both geometries, there is an isometry (a reflection) that fixes $v$ and interchanges $u$ and $w$. Note that such an isometry will interchange the equal sides, and preserve the geodesic side $uw$, flipping its orientation. In particular, the isometry takes the angle $\angle vwu$ to $\angle vuw$, and since an isometry preserves angles, these two angles are equal. 
\end{proof}

\subsection{Areas of non-Euclidean triangles}


The following Lhuilier's formula for the spherical triangle is known in the 19th century itself (cf. \cite[Page 36, Part II]{MP}). 

\begin{prop} \label{Lhuilier}
Let $T$ be a spherical triangle in an open hemisphere. If the  side lengths of $T$ are $a, b, c$, then its area $S= \mbox{area}(T)$ is given by:
\begin{align*}
\tan^2\frac{S}{4} &= \tan\frac{s}{2}\tan\frac{s-a}{2}\tan\frac{s-b}{2}\tan\frac{s-c}{2}, 
\end{align*}
where $s = (a+b+c)/2$ is the semi-perimeter of $T$.
\end{prop}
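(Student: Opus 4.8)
The plan is to derive the formula from two classical ingredients of spherical trigonometry on the unit sphere: Girard's theorem, which identifies the area with the angular excess, $S = A + B + C - \pi$ (where $A, B, C$ are the interior angles opposite to $a, b, c$), and the spherical law of cosines $\cos a = \cos b\cos c + \sin b\sin c\cos A$, from which everything else is squeezed out by elementary manipulation. First I would record the spherical half-angle formulas: writing $1 \mp \cos A$ using the law of cosines and collapsing with sum-to-product gives
\[
\sin^2\tfrac A2 = \frac{\sin(s-b)\sin(s-c)}{\sin b\sin c}, \qquad \cos^2\tfrac A2 = \frac{\sin s\,\sin(s-a)}{\sin b\sin c},
\]
together with the analogous expressions obtained by permuting $a,b,c$ and $A,B,C$. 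Here each half-angle lies in $(0,\pi/2)$, so the positive square roots are the correct ones.

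Next, using Girard's theorem I would write $\tfrac S4 = \tfrac12\bigl(\tfrac{A+B}{2} - \tfrac{\pi - C}{2}\bigr)$ and apply the half-difference identity $\tan\tfrac{P-Q}{2} = \tfrac{\sin P - \sin Q}{\cos P + \cos Q}$ with $P = \tfrac{A+B}{2}$ and $Q = \tfrac{\pi - C}{2}$; since $\sin\tfrac{\pi-C}{2} = \cos\tfrac C2$ and $\cos\tfrac{\pi-C}{2} = \sin\tfrac C2$, this yields
\[
\tan\frac S4 = \frac{\sin\frac{A+B}{2} - \cos\frac C2}{\cos\frac{A+B}{2} + \sin\frac C2}.
\]
The key step is then to convert $\sin\tfrac{A+B}{2}$ and $\cos\tfrac{A+B}{2}$ into functions of the sides. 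Expanding by the angle-addition formula, substituting the half-angle formulas above, and collapsing the resulting sums (using $\sin(s-a) + \sin(s-b) = 2\sin\tfrac c2\cos\tfrac{a-b}{2}$ and $\sin s - \sin(s-c) = 2\sin\tfrac c2\cos\tfrac{a+b}{2}$) produces the two Delambre--Gauss analogies
\[
\sin\frac{A+B}{2} = \cos\frac C2\,\frac{\cos\frac{a-b}{2}}{\cos\frac c2}, \qquad \cos\frac{A+B}{2} = \sin\frac C2\,\frac{\cos\frac{a+b}{2}}{\cos\frac c2}.
\]

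Substituting these into the expression for $\tan\tfrac S4$, the common factor $1/\cos\tfrac c2$ cancels, and a further application of sum-to-product, now recognizing $\tfrac{a-b+c}{4} = \tfrac{s-b}{2}$, $\tfrac{b+c-a}{4} = \tfrac{s-a}{2}$, $\tfrac{a+b+c}{4} = \tfrac s2$, and $\tfrac{a+b-c}{4} = \tfrac{s-c}{2}$, reduces it to
\[
\tan\frac S4 = \cot\frac C2\cdot\frac{\sin\frac{s-a}{2}\sin\frac{s-b}{2}}{\cos\frac s2\cos\frac{s-c}{2}}.
\]
Finally I would square, insert $\cot^2\tfrac C2 = \tfrac{\sin s\,\sin(s-c)}{\sin(s-a)\sin(s-b)}$ from the half-angle formula, and expand each of $\sin s$, $\sin(s-a)$, $\sin(s-b)$, $\sin(s-c)$ by the double-angle rule $\sin\theta = 2\sin\tfrac\theta2\cos\tfrac\theta2$; the factors of $2$ cancel and each remaining group collapses to a single tangent, giving $\tan\tfrac s2\tan\tfrac{s-a}{2}\tan\tfrac{s-b}{2}\tan\tfrac{s-c}{2}$, which is the claim.

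The main obstacle is organizational rather than conceptual: the derivation is a long chain of standard but intricate trigonometric identities (law of cosines $\to$ half-angle formulas $\to$ Delambre analogies $\to$ final cancellation), and the one genuinely error-prone point is the careful tracking of signs and of which square root to take in the half-angle formulas and sum-to-product steps. This sign bookkeeping, together with the open-hemisphere hypothesis guaranteeing $a,b,c \in (0,\pi)$ and $s < \pi$ so that all the quantities $\tfrac s2, \tfrac{s-a}{2}, \tfrac{s-b}{2}, \tfrac{s-c}{2}$ lie in $(0,\pi/2)$ with positive tangents, is precisely the kind of routine-but-delicate step that the paper's stated aim of a machine-checkable argument would want spelled out explicitly.
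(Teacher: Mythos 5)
Your proposal is correct, but note that the paper does not prove this proposition at all: it quotes Lhuilier's formula as a classical fact, citing M\textquoteright Clelland--Preston's 1890 treatise, just as it quotes the hyperbolic analogue (Proposition \ref{Heron}) from \cite{AM}. So you are supplying a proof where the paper supplies a reference. Your derivation is the standard classical one --- Girard's theorem plus the spherical half-angle formulas and the two Delambre--Gauss analogies --- and every step checks out: the half-angle formulas follow from the law of cosines exactly as you say; the identity $\tan\frac{S}{4} = \cot\frac{C}{2}\cdot\frac{\sin\frac{s-a}{2}\sin\frac{s-b}{2}}{\cos\frac{s}{2}\cos\frac{s-c}{2}}$ is correct (the numerator $\cos\frac{a-b}{2}-\cos\frac{c}{2}$ and denominator $\cos\frac{a+b}{2}+\cos\frac{c}{2}$ collapse by sum-to-product exactly as you indicate); and the final double-angle expansion does yield the product of four tangents. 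One small point worth making explicit, in the spirit of your own remark about sign bookkeeping: the half-difference identity $\tan\frac{P-Q}{2}=\frac{\sin P-\sin Q}{\cos P+\cos Q}$ requires the denominator $\cos\frac{A+B}{2}+\sin\frac{C}{2}$ to be nonzero, and this is not obvious a priori since $\frac{A+B}{2}$ may exceed $\pi/2$; it follows from your own Delambre analogy, since the denominator equals $\sin\frac{C}{2}\,\frac{2\cos\frac{s}{2}\cos\frac{s-c}{2}}{\cos\frac{c}{2}}$, which is strictly positive because the open-hemisphere hypothesis puts $\frac{s}{2}$, $\frac{s-c}{2}$, and $\frac{c}{2}$ all in $(0,\pi/2)$. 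With that nonvanishing check inserted, your argument is a complete and self-contained proof of the proposition.
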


Similar to Lhuilier's formula, the following formula (cf. \cite[Page 152]{AM}) gives the area of a hyperbolic triangle in terms of the lengths of the sides. 

\begin{prop} \label{Heron}
Let $T$ be a hyperbolic triangle with side lengths $a, b, c$.  Then its area $S= \mbox{area}(T)$ is given by:
\begin{align*}
\tan^2\frac{S}{2} &= \tanh\frac{s}{2}\tanh\frac{s-a}{2}\tanh\frac{s-b}{2}\tanh\frac{s-c}{2}, 
\end{align*}
where $s = (a+b+c)/2$ is the semi-perimeter of $T$.
\end{prop}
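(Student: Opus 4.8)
The plan is to run the computation in exact parallel with the classical derivation of Lhuilier's formula (Proposition \ref{Lhuilier}), replacing the circular functions of the side lengths by their hyperbolic counterparts and using the Gauss--Bonnet relation in place of the spherical-excess relation. There is also a slicker route worth recording: since the hyperbolic law of cosines is obtained from the spherical one by the formal substitution $a\mapsto ia$, $b\mapsto ib$, $c\mapsto ic$ (under which $\cos\mapsto\cosh$, $\sin\mapsto i\sinh$, and hence $\tan\frac{s}{2}\mapsto i\tanh\frac{s}{2}$), one can in principle deduce the Proposition from Proposition \ref{Lhuilier} by analytic continuation in the side lengths, the four factors of $i$ on the right-hand side cancelling while the area picks up the sign distinguishing excess from defect. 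I would, however, favour the self-contained computation, both because it is elementary and because justifying the complexification rigorously is itself delicate.

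First I would invoke the Gauss--Bonnet theorem, which in the hyperbolic plane (curvature $-1$) gives the area as the angle defect, $S = \pi - (A+B+C)$, where $A,B,C$ are the interior angles; this is the analogue, with opposite sign, of the spherical excess underlying Lhuilier's formula. Next, from the hyperbolic law of cosines $\cosh a = \cosh b\cosh c - \sinh b\sinh c\,\cos A$ I would extract $\cos A$ and feed it into $\sin^2\frac{A}{2}=\tfrac12(1-\cos A)$ and $\cos^2\frac{A}{2}=\tfrac12(1+\cos A)$. Using the product-to-sum identities $\cosh a - \cosh(b-c)=2\sinh(s-b)\sinh(s-c)$ and $\cosh(b+c)-\cosh a = 2\sinh s\,\sinh(s-a)$, this produces the hyperbolic half-angle formulas
\[
\sin^2\tfrac{A}{2} = \frac{\sinh(s-b)\sinh(s-c)}{\sinh b\,\sinh c},
\qquad
\cos^2\tfrac{A}{2} = \frac{\sinh s\,\sinh(s-a)}{\sinh b\,\sinh c},
\]
together with their cyclic permutations. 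Multiplying the appropriate pairs then yields the clean relations $\tan\frac{A}{2}\tan\frac{B}{2} = \sinh(s-c)/\sinh s$, and cyclically.

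Finally I would convert the angle defect into the stated product. Writing $\frac{S}{2}=\frac{\pi}{2}-\frac{A+B+C}{2}$ turns $\tan\frac{S}{2}$ into $\cot\frac{A+B+C}{2}$, which the tangent-addition formula renders as a rational function of $p=\tan\frac{A}{2}$, $q=\tan\frac{B}{2}$, $r=\tan\frac{C}{2}$; the pairwise products $pq=\sinh(s-c)/\sinh s$ (and cyclically) then collapse this into $\sinh$'s of $s,s-a,s-b,s-c$ alone. Passing to the quarter-angle, exactly as in the passage to the quarter-excess $\tan\frac{E}{4}$ that occurs in Lhuilier's formula, symmetrizes the expression into a perfect square, and the half-argument identity $\tanh\frac{x}{2}=(\cosh x-1)/\sinh x$ finally exhibits it as the product $\tanh\frac{s}{2}\tanh\frac{s-a}{2}\tanh\frac{s-b}{2}\tanh\frac{s-c}{2}$ on the right-hand side. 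The main obstacle is precisely this last algebraic reduction: keeping careful track of the half- versus quarter-argument bookkeeping and of the sign that distinguishes the hyperbolic defect from the spherical excess, so that the four hyperbolic-tangent factors emerge with the correct arguments and coefficient. I would guard against slips by checking the small-triangle (Euclidean) limit, where $\tanh\frac{x}{2}\approx\frac{x}{2}$ and the identity must degenerate to the squared Heron relation $\mathrm{Area}^2=s(s-a)(s-b)(s-c)$; this limit pins down the constant on the left-hand side unambiguously.
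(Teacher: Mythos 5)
The paper contains no proof of this proposition to compare against: it is quoted from \cite{AM} without argument. So your proposal has to be judged against the statement itself, and there the comparison exposes a real problem. Your outline (Gauss--Bonnet, the hyperbolic law of cosines, the half-angle identities, then the tangent-addition reduction) is the standard derivation, and every intermediate identity you record is correct, in particular
\[
\sin^2\tfrac{A}{2} = \frac{\sinh(s-b)\sinh(s-c)}{\sinh b\,\sinh c},\qquad
\cos^2\tfrac{A}{2} = \frac{\sinh s\,\sinh(s-a)}{\sinh b\,\sinh c},\qquad
\tan\tfrac{A}{2}\tan\tfrac{B}{2}=\frac{\sinh(s-c)}{\sinh s}.
\]
But carried to completion this computation proves
\[
\tan^2\frac{S}{4} \;=\; \tanh\frac{s}{2}\tanh\frac{s-a}{2}\tanh\frac{s-b}{2}\tanh\frac{s-c}{2},
\]
with the \emph{quarter}-angle on the left, and that is not the statement you set out to prove: Proposition \ref{Heron} as printed, with $\tan^2\frac{S}{2}$, is false. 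Indeed the right-hand side is a product of four numbers each less than $1$, while $\tan^2\frac{S}{2}\to\infty$ as the triangle tends to an ideal one ($S\to\pi$); any triangle of area greater than $\pi/2$ already violates the printed identity. Your own proposed safeguard detects exactly this: in the Euclidean limit the right-hand side is $s(s-a)(s-b)(s-c)/16=\mathrm{Area}^2/16$, which matches $\tan^2\frac{S}{4}\approx S^2/16$ but is off by a factor of $4$ from $\tan^2\frac{S}{2}\approx S^2/4$. The same happens along your ``slicker route'': substituting $a\mapsto ia$ into Proposition \ref{Lhuilier} turns its right side into the four-$\tanh$ product (the factors of $i$ cancel) but leaves its left side a quarter-angle, the excess-versus-defect sign being killed by the square. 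So the genuine gap is in your final step: you defer the ``half- versus quarter-argument bookkeeping'' as the main obstacle and assert that it terminates at the stated left-hand side; no bookkeeping can end there, and the limit check you describe would have refuted, not confirmed, the printed formula.

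What your outline actually establishes, once the deferred algebra is written out, is the corrected identity with $\tan^2\frac{S}{4}$ --- the exact hyperbolic parallel of the spherical Proposition \ref{Lhuilier}, which the paper does state with the quarter-angle. You should therefore present the proof of the corrected statement and flag the proposition as an erratum, replacing $\frac{S}{2}$ by $\frac{S}{4}$. Two further remarks. First, as written your text is a plan rather than a proof: the one step you yourself identify as ``the main obstacle'' (reducing $\cot\frac{A+B+C}{2}$, or better $\tan\frac{\pi-(A+B+C)}{4}$, to the $\sinh$-product via $pq=\sinh(s-c)/\sinh s$ and its cyclic mates) is precisely the step left unexecuted, so it must be carried out before this counts as a proof even of the corrected statement. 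Second, the misprint is harmless for the paper's main theorem: the only use of Proposition \ref{Heron} (in the proof of Claim \ref{cl:h1}) is that the area is a strictly increasing function of the four-$\tanh$ product, which is equally true of the corrected formula since $0<S/4<\pi/4$.
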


These can be thought of as the non-Euclidean analogues of the familiar Heron's formula for the area of a Euclidean triangle:  $S^2 = s(s-a)(s-b)(s-c)$.

\subsection{Areas of non-Euclidean quadrilaterals} 

A polygon (in any of the geometries) is said to be \textit{cyclic} if its vertices lie on a circle, in this case we  also say the polygon is inscribed in the circle. It is a well-known fact that \textit{amongst Euclidean quadrilaterals with given sides, the quadrilateral of maximum area is cyclic}. One proof of this uses  a formula for the area of  quadrilaterals in terms of the side-lengths and the average of opposite interior angles that generalizes Brahmagupta's formula for the area of cyclic quadrilaterals (see Page 387 of \cite{M}). 

The article \cite{W} of Wimmer was about the non-Euclidean generalizations of this, that will be key in our proof. In particular, he presented a proof of the following: 

\begin{prop}[Theorem 1 in \cite{W}] 
\label{W1}
Among all non-Euclidean  (i.e., spherical and hyperbolic) quadrilaterals of given sides, a quadrilateral $Q$ with internal angles $A, B, C, D$ (in a cyclic order) has largest area if and only if $A+C = B+D$. 
\end{prop}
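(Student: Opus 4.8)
The plan is to reduce the optimization over quadrilaterals of fixed side-lengths to a one-parameter family, then use a calculus/monotonicity argument on the area function. Fix the four side-lengths $a,b,c,d$ (in cyclic order) of the quadrilateral $Q$. A convex quadrilateral with prescribed sides has one degree of freedom: I would parametrize it by the length $p$ of the diagonal that splits $Q$ into two triangles, say the diagonal joining the vertex between sides $a,b$ to the vertex between sides $c,d$. This diagonal cuts $Q$ into a triangle $T_1$ with sides $a,b,p$ and a triangle $T_2$ with sides $c,d,p$. The total area is $S(p)=\mathrm{area}(T_1)+\mathrm{area}(T_2)$, and the admissible range of $p$ is the interval on which both triangle inequalities hold (in the spherical case, subject to staying in an open hemisphere). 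The strategy is to show that the stationary points of $S(p)$ are exactly the configurations satisfying $A+C=B+D$, and that such a stationary point is the unique maximum.

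\medskip

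\noindent The key computation is to express $dS/dp$ in terms of the angles of $Q$. Here I would invoke the non-Euclidean first variation of triangle area with respect to one side while the other two sides are held fixed. Concretely, for a triangle with two fixed sides and variable third side $p$, the rate of change of area as $p$ varies is governed by the angles adjacent to $p$. I expect the clean statement to be that, in both geometries, differentiating the area of $T_1$ with respect to $p$ produces a combination of the two angles of $T_1$ at the endpoints of the diagonal, and similarly for $T_2$; the first-order condition $S'(p)=0$ then collapses to the statement that the two angles of $T_1$ abutting the diagonal sum to the two angles of $T_2$ abutting the diagonal (equivalently, the diagonal makes supplementary-type angle conditions on the two sides). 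Rewriting the four interior angles $A,B,C,D$ of $Q$ as sums of the corresponding triangle angles at each vertex, this condition is algebraically equivalent to $A+C=B+D$. I would carry out this derivation using either Lhuilier's formula (Proposition \ref{Lhuilier}) or the hyperbolic analogue (Proposition \ref{Heron}), differentiating implicitly in $p$; alternatively, the spherical/hyperbolic law of cosines for angles gives a more transparent route to $dA/dp$ for each triangle angle.

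\medskip

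\noindent Finally I would establish uniqueness and that the stationary point is a maximum. Since the area functions $\mathrm{area}(T_i)$ are smooth and $S(p)$ vanishes (or degenerates to zero area) at both endpoints of the admissible interval while being strictly positive in the interior, $S$ attains a maximum at an interior critical point. To conclude that the critical point is unique and is the global maximum I would argue that $S'(p)$ changes sign exactly once, e.g.\ by showing $S'(p)$ is strictly decreasing, or by a direct monotonicity analysis of the angle sum $(\text{angles of }T_1\text{ at }p)-(\text{angles of }T_2\text{ at }p)$ as a function of $p$: increasing $p$ increases the angles of each triangle at the shared vertices in a controlled way, so the difference is strictly monotone and hits zero once. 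This monotonicity step is the main obstacle, since it requires the precise sign of the second variation (or a convexity property of the angle-versus-side relation) in each of the two non-Euclidean geometries, and the spherical case additionally demands checking that the maximizing configuration stays within an open hemisphere so that the area formulas and the hemisphere hypothesis remain valid throughout the deformation.
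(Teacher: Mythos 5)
The first thing to note is that the paper does \emph{not} prove this proposition: it is quoted, with attribution, as Theorem 1 of Wimmer's article \cite{W} and used as a black box in the proof of Theorem \ref{peri&area}. So your attempt can only be judged on its own terms, and it has a genuine gap at its central step. The reduction to the one-parameter family $S(p)=\mathrm{area}(T_1)+\mathrm{area}(T_2)$ is a legitimate strategy, but the ``clean statement'' you predict for the first-order condition --- that the two angles of $T_1$ abutting the diagonal have the same sum as the two angles of $T_2$ abutting the diagonal --- is false, and it is \emph{not} algebraically equivalent to $A+C=B+D$. Test it on a rhombus ($a=b=c=d$): for every admissible $p$ the triangles $T_1$ and $T_2$ are congruent by SSS, so your condition holds identically along the whole family, yet $S(p)$ is certainly not constant and only one member of the family is the maximizer. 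Equivalently, already in the Euclidean model case your condition reduces, via the triangle angle sum, to $B=D$ (where $B,D$ are the quadrilateral angles opposite the diagonal), whereas the correct critical-point condition is $B+D=\pi$, i.e.\ $A+C=B+D$.

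The formula you would actually need keeps the angle \emph{opposite} the diagonal in the game. For a hyperbolic (resp.\ spherical) triangle with two sides held fixed, third side $p$, angles $\alpha,\beta$ adjacent to $p$ and $\gamma$ opposite $p$, a computation from the law of cosines and Napier's analogies gives
\begin{equation*}
\frac{dS}{dp}\;=\;\tanh\frac{p}{2}\,\tan\frac{\alpha+\beta-\gamma}{2},
\qquad\text{resp.}\qquad
\frac{dS}{dp}\;=\;\tan\frac{p}{2}\,\tan\frac{\alpha+\beta-\gamma}{2}.
\end{equation*}
Two features of this identity are load-bearing and absent from your sketch: the relevant angle combination is $\alpha+\beta-\gamma$, not $\alpha+\beta$; and the positive prefactor depends only on the \emph{shared} diagonal $p$, not on the other two sides. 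The second point is precisely what makes $S'(p)=0$ collapse to $\tan\frac{\delta_1}{2}+\tan\frac{\delta_2}{2}=0$, where $\delta_i$ denotes the combination $\alpha_i+\beta_i-\gamma_i$ for $T_i$, hence to $\delta_1=-\delta_2$, which is $A+C=B+D$; if the two triangles carried different prefactors, no such conclusion would follow from $S_1'(p)=-S_2'(p)$. Beyond this, the uniqueness/monotonicity step you defer is needed for \emph{both} directions of the ``if and only if'' (a configuration satisfying $A+C=B+D$ must be \emph{the} maximum); one way to close it is to show each $\delta_i$ is strictly decreasing in $p$ --- immediate from Napier's analogies in the hyperbolic case --- so that $S'$ changes sign exactly once, but as written your argument establishes neither direction. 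Finally, the spherical case needs the hemisphere check you flag, in particular to keep each $\delta_i/2$ in a range where $\tan$ is injective.
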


Moreover, he showed that in the spherical case, this opposite-angle condition is equivalent to the quadrilateral being cyclic:

\begin{prop}[Lemmas 2 \& 3 in \cite{W}] 
\label{W2}
A spherical convex  quadrilateral $Q$ with internal angles $A, B, C, D$ (in a cyclic order) is cyclic if and only if $A+C = B+D$. 
\end{prop}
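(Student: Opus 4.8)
The plan is to fix three of the four vertices and to characterize concyclicity by the position of the fourth. Write the vertices as $P_1, P_2, P_3, P_4$ in cyclic order, with interior angles $A, B, C, D$ respectively, and let $\Gamma$ be the (unique) spherical circle through the nondegenerate triple $P_1, P_2, P_3$, with center $O$ and radius $r$. Since any circle containing all four vertices must coincide with $\Gamma$, the quadrilateral $Q$ is cyclic if and only if $P_4 \in \Gamma$, i.e.\ if and only if $t := d(O, P_4)$ equals $r$. I would therefore prove the proposition in the sharp form ``$A + C = B + D \iff t = r$'', which yields both implications at once.

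Joining $O$ to each vertex cuts $Q$ into four triangles and splits each interior angle. Writing $x_1, x_2$ for the base angles of the isosceles triangles $OP_1P_2$ and $OP_2P_3$ (equal by Lemma \ref{lem1}, since $OP_1 = OP_2 = OP_3 = r$), and writing $u = \angle OP_1P_4$, $v = \angle OP_4P_1$, $p = \angle OP_3P_4$, $q = \angle OP_4P_3$ for the remaining two triangles $OP_4P_1$ and $OP_3P_4$, additivity of angles at the vertices gives $A = u + x_1$, $B = x_1 + x_2$, $C = x_2 + p$, and $D = q + v$. Hence
\[
(A + C) - (B + D) = (u + p) - (v + q) = (u - v) - (q - p),
\]
so the condition $A + C = B + D$ is equivalent to the single relation $u - v = q - p$.

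It then remains to analyze the two triangles sharing the unknown side $OP_4$ of length $t$. In $OP_4P_1$ the side $OP_1 = r$ is opposite $v$ while $OP_4 = t$ is opposite $u$; in $OP_3P_4$ the side $OP_3 = r$ is opposite $q$ while $OP_4 = t$ is opposite $p$. Using the strict spherical comparison principle that the larger side lies opposite the larger angle (equivalently, the converse of Lemma \ref{lem1} together with its monotone refinement, valid for triangles inside a hemisphere), we obtain $\operatorname{sgn}(v - u) = \operatorname{sgn}(q - p) = \operatorname{sgn}(r - t)$. Thus $u - v$ and $q - p$ carry opposite signs whenever $t \neq r$, so the equality $u - v = q - p$ forces $t = r$, i.e.\ $P_4 \in \Gamma$; conversely $t = r$ makes both differences vanish. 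This establishes the equivalence, and hence the proposition. The main obstacle I anticipate is geometric bookkeeping rather than the core estimate: one must ensure that $O$ lies in the interior of $Q$ so that the decompositions $A = u + x_1$, etc., are genuinely additive (this is where convexity enters, and where a short case analysis or directed angles may be needed should the circumcenter fall outside the triangle $P_1P_2P_3$), and one must justify existence and uniqueness of the spherical circumcircle $\Gamma$ as well as the strict monotonicity of the side–angle comparison for the admissible sub-hemispheric triangles.
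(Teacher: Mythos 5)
First, a remark on the comparison itself: the paper does not prove Proposition \ref{W2} at all --- it is imported verbatim as Lemmas 2 and 3 of Wimmer \cite{W} --- so your argument is not competing against an in-paper proof. Its closest relative inside the paper is Case 1 of Claim \ref{cl:h3}, which uses the same device (join the center to the vertices, invoke Lemma \ref{lem1} for the isosceles base angles), but only in the ``cyclic $\Rightarrow$ angle relation'' direction.

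Your proof, however, has a genuine gap, and it sits exactly at the point you set aside as ``geometric bookkeeping rather than the core estimate.'' The identities $A = u + x_1$, $B = x_1 + x_2$, $C = x_2 + p$, $D = q + v$ require the circumcenter $O$ of $P_1P_2P_3$ to lie on the interior side of every side of $Q$. This fails for perfectly ordinary convex quadrilaterals: take four concyclic points on a short arc, say at angular positions $90^\circ, 80^\circ, 70^\circ, 60^\circ$ (a ``sliver''; the picture is the same in Euclidean and spherical geometry). Then $Q$ is convex and cyclic, but $O$ lies beyond the long side $P_4P_1$, and at $P_1$ one has $A = x_1 - u$, not $x_1 + u$. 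The repair is to use signed angles: with $\epsilon_{41}, \epsilon_{34} \in \{\pm 1\}$ recording whether $O$ lies on the interior or exterior side of the geodesics $P_4P_1$ and $P_3P_4$, the correct identity reads
\[
(A+C)-(B+D) \;=\; \epsilon_{41}(u-v) \;+\; \epsilon_{34}(p-q).
\]
The forward implication survives in every configuration, since cyclicity gives $u=v$ and $p=q$ regardless of the signs. But your converse argument is a pure sign argument: it needs the two summands to have \emph{opposing} signs when $t \neq r$. Since $\operatorname{sgn}(u-v) = \operatorname{sgn}(p-q) = \operatorname{sgn}(t-r)$ by the side--angle comparison, the summands oppose each other exactly when $\epsilon_{41} = \epsilon_{34}$; if instead $O$ is cut off from the interior of $Q$ by exactly one of the two sides through $P_4$ (so $\epsilon_{41} = -\epsilon_{34}$), the two terms have \emph{opposite} signs in the display and can cancel with $t \neq r$, and no contradiction follows. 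Closing this case is not bookkeeping: one must either show that for every convex quadrilateral some choice of omitted vertex makes the two relevant signs agree, or upgrade the sign comparison to a quantitative inequality $|u-v| \neq |p-q|$ in the mixed-sign position. Note that the direction affected --- ``$A+C=B+D$ implies cyclic'' --- is precisely the direction the paper actually uses (after Proposition \ref{W1}) in its main proof, so the gap is not in a throwaway half of the statement.
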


Although in the hyperbolic case, the equivalence fails, he showed that the following holds: 

\begin{prop}[Lemma 4 in \cite{W}] 
\label{W3}
Let $Q$ be a convex quadrilateral in the hyperbolic disc  with internal angles $A, B, C, D$ (in a cyclic order). If  $A+C = B+D$ then $Q$ is inscribed into a circle, a horocycle, or a hypercycle.
\end{prop}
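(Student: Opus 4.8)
The plan is to fix three of the four vertices and show that the fourth is forced onto the unique generalized circle through them. Write the quadrilateral as $PQRS$ in cyclic order, with interior angles $A,B,C,D$ at $P,Q,R,S$, and split it by the diagonal $PR$ into the triangles $\triangle PQR$ and $\triangle PRS$, which lie on opposite sides of the geodesic through $P$ and $R$. First I would record an elementary reformulation of the hypothesis. Since the angle at $P$ (resp.\ $R$) splits as $A=\angle SPR+\angle RPQ$ (resp.\ $C=\angle QRP+\angle PRS$), and since the angle sum of a hyperbolic triangle equals $\pi$ minus its area, a one-line computation turns $A+C=B+D$ into
\[
2\,\angle PSR + \mathrm{area}(\triangle PRS) \;=\; 2\pi - 2B - \mathrm{area}(\triangle PQR).
\]
The right-hand side, call it $M$, depends only on the fixed data $P,Q,R$; so, setting $g(S):=2\,\angle PSR+\mathrm{area}(\triangle PRS)$, the hypothesis is exactly $g(S)=M$.

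Next I would pin down the competitor locus. Through $P,Q,R$ there passes a unique generalized circle $\Gamma$ — a circle, horocycle, or hypercycle — according to whether the perpendicular bisectors of $PQ$ and $QR$ meet, are asymptotically parallel, or are ultraparallel. More generally, the generalized circles through the two points $P$ and $R$ form a pencil whose arcs on the $S$-side foliate the open half-plane bounded by the geodesic $PR$, each point lying on exactly one such arc (three points determine one generalized circle). I claim $g$ is constant along each leaf, with $\Gamma$ carrying precisely the value $M$. This is the non-Euclidean analogue of the cyclic-quadrilateral angle identity (the hyperbolic counterpart of Proposition~\ref{W2}), and I would prove it by a central decomposition: joining the vertices to the (possibly ideal) center of a leaf cuts an inscribed quadrilateral into isosceles pieces whose base angles are equal by Lemma~\ref{lem1} and its extensions — the isosceles ideal triangle for a horocycle and the Saccheri quadrilateral for a hypercycle, to which the same reflection argument applies. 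Summing the equal base angles $\alpha,\beta,\gamma,\delta$ gives $A+C=\alpha+\beta+\gamma+\delta=B+D$ for any quadrilateral inscribed in a leaf, so $g\equiv M$ on $\Gamma$ (and $g$ is constant on every other leaf for the same reason).

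Finally I would show that $g$ \emph{separates} the leaves, i.e.\ takes distinct values on distinct arcs of the pencil. Combined with the previous steps this forces the fourth vertex $S$, which satisfies $g(S)=M$, to lie on the $S$-arc of $\Gamma$ itself, so $P,Q,R,S$ are concyclic and the proof is complete. This monotonicity is the crux, and the main obstacle, precisely because the naive inscribed-angle theorem fails in non-Euclidean geometry: along a fixed leaf the inscribed angle $\angle PSR$ is \emph{not} constant, and it is only the combination $2\,\angle PSR+\mathrm{area}$ that is. To prove it I would totally order the pencil by nesting (two members meet only at $P,R$, so their $S$-arcs are disjoint and nested), parametrize the leaves by the angle $\psi$ they make with the chord $PR$ at $P$, and show $g$ is a strictly monotone function of $\psi$: concretely, express $\angle PSR$ and $\mathrm{area}(\triangle PRS)$ along a leaf in terms of $\psi$ and the fixed length $|PR|$ using the hyperbolic area formula of Proposition~\ref{Heron}, reducing the claim to a single-variable inequality $dg/d\psi\neq 0$. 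The boundary behaviour — $g\to 2\pi$ as the arc collapses onto the chord and $g<\pi$ as it swells toward the ideal boundary — then fixes the sign and guarantees the level $g=M$ is attained on exactly one leaf. Treating the three leaf types uniformly, and controlling the configuration where the center falls outside the quadrilateral (so that some base angles enter with a sign), are the points I expect to require the most care.
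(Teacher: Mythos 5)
You should know at the outset that the paper contains \emph{no} proof of Proposition \ref{W3}: it is imported as a black box from Wimmer's article (Lemma 4 of \cite{W}), so there is no internal argument to compare yours against, and your proposal is in effect an offer to make the paper self-contained at this point. Judged on its own terms, your outline is correct and completable. The reformulation of $A+C=B+D$ as $g(S)=M$ is right (split along the diagonal $PR$, which convexity permits, and apply the angle-deficit area formula to the two triangles), and your leaf-constancy claim is the converse implication ``inscribed $\Rightarrow A+C=B+D$'', which does follow from the isosceles/ideal-triangle/Saccheri reflection decompositions --- exactly the decompositions this paper deploys in Cases 1--3 of Claim \ref{cl:h3}, so that ingredient sits squarely inside the paper's own toolkit; one precision worth adding is that to convert the quadrilateral identity into constancy of $g$ along a leaf you should compare two points $S_1,S_2$ of the leaf via a common fourth point $T$ on the far arc of the same member (giving $g(S_i)=2\pi-2\angle PTR-\mathrm{area}(\triangle PTR)$ for both), since applying the identity to the quadrilateral $PS_1S_2R$ instead relates $g$-type quantities of \emph{different} chords and does not close up. Two further comments. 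First, the step you call the crux is easier than you fear: since $g$ is constant on each leaf, evaluate the constant by letting $S\to P$ along the leaf, where $\mathrm{area}(\triangle PRS)\to 0$ and $\angle PSR\to \pi-\psi$; hence $g\equiv 2(\pi-\psi)$ on the leaf meeting the chord at angle $\psi$ at $P$, strict monotonicity in $\psi$ is automatic, and injectivity across leaves follows because two distinct members through $P$ and $R$ cannot share a tangent at $P$ --- no calculus with Proposition \ref{Heron} is needed. Second, the point you underplay is that a member of the pencil may exit the disc on the $S$-side, so its ``leaf'' need not be an arc joining $P$ to $R$ but can be a pair of arcs running to the ideal boundary, and convexity of $PQRS$ does \emph{not} confine $S$ to the region swept by connected arcs; your constancy and separation arguments (with the signed decompositions you flag) must be carried out in that regime as well, which your $\psi$-parametrization does cover uniformly. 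As for what each route buys: the paper's citation keeps its exposition short, while your argument, written out with the details above, gives a self-contained elementary proof in the spirit of the rest of the paper and en route establishes the hyperbolic inscribed-angle/tangent-chord theorem that Wimmer's lemma encapsulates.
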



\section{Proof of Theorem \ref{peri&area}}

\begin{proof}[Proof of Theorem \ref{peri&area}] 
 Fix $n\geq 3$. 
 
 \medskip
 
 \noindent {\bf Hyperbolic case}: 
Let $X$ be a hyperbolic $n$-gon  of perimeter $L$ in the open Poincar\'{e} disk $D$ and of largest area among all hyperbolic $n$-gons  of perimeters $L$ in $D$. 

\begin{claim} \label{cl:h1}
{\rm $X$ is equilateral. }
\end{claim}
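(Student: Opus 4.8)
The plan is to argue by contradiction through a local variation at a single vertex, exactly in the spirit of Mossinghoff's Euclidean scheme. Suppose $X$ were not equilateral. We may assume $X$ is convex: reflecting a reflex vertex across the geodesic through its two neighbours preserves the two incident edge lengths (hence the perimeter) and the number of sides while strictly increasing area. Then there would exist three consecutive vertices $u,v,w$ of $X$ with $|uv|\neq|vw|$, and the diagonal $uw$ would split off the triangle $uvw$ from the remaining $(n-1)$-gon $X'$ whose boundary uses the chord $uw$. Holding $u$, $w$, and all the other vertices of $X$ fixed, I would move $v$ to a new point $v'$ on the same side of the geodesic $uw$, subject to the constraint $|uv'|+|v'w|=|uv|+|vw|$. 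This keeps the perimeter unchanged, since $\mathrm{perim}(X)=\mathrm{perim}(X')-|uw|+\big(|uv|+|vw|\big)$, and changes the total area by exactly $\mathrm{area}(uv'w)-\mathrm{area}(uvw)$.

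Everything then reduces to the following sub-lemma: \emph{among all hyperbolic triangles with a fixed base $c=|uw|$ and fixed leg-sum $a+b=|uv|+|vw|$, the area is strictly maximized at the isosceles triangle $a=b$.} To prove it I would invoke Proposition \ref{Heron}. With $c$ and $a+b$ fixed, the semi-perimeter $s$ is fixed, so $\tanh\frac{s}{2}\tanh\frac{s-c}{2}$ is a positive constant and
\[
\tan^2\tfrac{S}{2}=\Big(\tanh\tfrac{s}{2}\tanh\tfrac{s-c}{2}\Big)\cdot\tanh\tfrac{s-a}{2}\,\tanh\tfrac{s-b}{2}.
\]
Writing $p=s-a$ and $q=s-b$, the constraint becomes $p+q=2s-(a+b)=c$, a fixed positive constant, and maximizing the area $S\in(0,\pi)$ is equivalent (since $t\mapsto\tan^2\tfrac t2$ is strictly increasing on $(0,\pi)$) to maximizing $\tanh\tfrac p2\tanh\tfrac q2$, i.e. to maximizing $\phi(p)+\phi(q)$ where $\phi(t)=\log\tanh\tfrac t2$. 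A short computation gives $\phi'(t)=1/\sinh t$ and $\phi''(t)=-\cosh t/\sinh^2 t<0$, so $\phi$ is strictly concave on $(0,\infty)$; hence $\phi(p)+\phi(q)$ with $p+q$ fixed is uniquely maximized at $p=q$, that is, at $a=b$. The spherical case is entirely parallel, replacing Proposition \ref{Heron} by Lhuilier's formula (Proposition \ref{Lhuilier}) and $\phi$ by $\psi(t)=\log\tan\tfrac t2$, which satisfies $\psi''(t)=-\cos t/\sin^2 t<0$ on the relevant range.

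Granting the sub-lemma, if $|uv|\neq|vw|$ I could take $v'$ to be the unique point on the same side of $uw$ making $uv'w$ isosceles with the prescribed leg-sum; then $\mathrm{area}(uv'w)>\mathrm{area}(uvw)$ strictly, so the modified polygon has the same perimeter but strictly larger area, contradicting the maximality of $X$. Hence every pair of adjacent sides is equal and $X$ is equilateral. I expect the main technical obstacle to be establishing the sub-lemma cleanly — in particular, pinning down the sign of the second derivative of $\log\tanh\tfrac t2$ (resp. $\log\tan\tfrac t2$) and verifying that the relevant arguments lie where these functions are concave (for the spherical case one must check $s-a,\,s-b<\pi/2$, using that the triangle sits in an open hemisphere). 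A secondary point requiring care is the legitimacy of the variation: one must confirm that $v'$ can be chosen so that the new $n$-gon is still simple and convex, which is precisely where the preliminary reduction to convex maximizers is used.
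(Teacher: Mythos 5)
Your proposal follows essentially the same route as the paper: assume the maximizer $X$ is not equilateral, replace the apex $v$ of two unequal adjacent sides by the isosceles apex $v'$ with the same leg-sum, and use the hyperbolic Heron formula (Proposition \ref{Heron}) to conclude the area strictly increases, contradicting maximality. In the hyperbolic case your argument is correct, and in one respect more complete than the paper's: where the paper simply asserts the inequality $\tanh x\tanh y<\tanh^2\frac{x+y}{2}$ for $x\neq y$, you derive it from strict concavity of $\phi(t)=\log\tanh\frac{t}{2}$, and the computation $\phi''(t)=-\cosh t/\sinh^2 t<0$ is right. Your attention to simplicity of the modified polygon (via the reduction to convex maximizers) addresses a point the paper passes over silently, namely that $\mathrm{area}(X)$ must decompose as the area of the truncated polygon plus the area of the triangle $uvw$.

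However, the spherical half of your argument contains a genuine error. You propose to ensure that $\psi(t)=\log\tan\frac{t}{2}$ is only evaluated where it is concave by checking $s-a,\,s-b<\pi/2$ ``using that the triangle sits in an open hemisphere'' --- but this is false in general. A thin triangle in an open hemisphere with $a=|uv|$ small and $b,c$ close to $\pi$ (take $u,w$ nearly antipodal points close to the equator and $v$ near $u$) has $s-a=(b+c-a)/2$ close to $\pi$, far beyond $\pi/2$; and $\psi$ is convex on $(\pi/2,\pi)$, so Jensen's inequality cannot be applied on the relevant interval. The sub-lemma is nonetheless true, and what saves it is only the constraint $p+q=c<\pi$. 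For instance,
\[
\tan\tfrac{p}{2}\tan\tfrac{q}{2}=\frac{\cos\tfrac{p-q}{2}-\cos\tfrac{p+q}{2}}{\cos\tfrac{p-q}{2}+\cos\tfrac{p+q}{2}},
\]
and since $\cos\frac{p+q}{2}=\cos\frac{c}{2}>0$, this is strictly increasing in $\cos\frac{p-q}{2}$, hence uniquely maximized at $p=q$. Alternatively, the only interior critical point of $g(p)=\psi(p)+\psi(c-p)$ occurs where $\sin p=\sin(c-p)$, which for $c<\pi$ forces $p=c/2$, while $g\to-\infty$ at both endpoints. Either repair replaces your concavity step and makes the spherical case --- which the paper handles by the same substitution into Lhuilier's formula (Proposition \ref{Lhuilier}) --- go through.
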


If possible, suppose $X$ is not equilateral. Then there exist two adjacent sides $uv$ and $vw$ of different length. Say, $\mbox{length}(uv) = a < b = \mbox{length}(vw)$. Let $\mbox{length}(uw) = c$. 


\begin{figure}[ht!]

\tikzstyle{ver}=[]
\tikzstyle{vert}=[circle, draw, fill=black!100, inner sep=0pt, minimum width=4pt]
\tikzstyle{vertex}=[circle, draw, fill=black!00, inner sep=0pt, minimum width=4pt]
\tikzstyle{edge} = [draw,thick,-]
\centering

\begin{tikzpicture}[scale=0.25]

\draw (8, 12) .. controls (9, 12.4) and (11.5, 13) .. (13.5, 15) .. controls (13.5, 15) and (16, 17) .. (17,20); 

\draw (32, 12) .. controls (29, 12.5) and (26, 13) .. (22.5, 15) .. controls (22.5, 15) and (19, 17) .. (17,20); 

\draw (8, 12) .. controls (12.5, 13) and (17, 13.5) .. (21.5, 13) .. controls (23, 13) and (27.5, 12.4) .. (32,12); 

\node[ver] () at (8.2, 12.1) {\small $\bullet$}; 
\node[ver] () at (17.1, 20) {\small $\bullet$};  
\node[ver] () at (32.1, 12) {\small $\bullet$}; 

 \node[ver] () at (8, 13.5) {$u$};  \node[ver] () at (18.5, 20) { $v$};  \node[ver] () at (32, 13.2) { $w$}; 
 
 \node[ver] () at (13.5, 16.5) {$a$};  \node[ver] () at (23, 16.5) {$b$};  
 \node[ver] () at (20, 12) {$c$}; 


\draw (36, 12) .. controls (38, 12.7) and (40, 13.5) .. (42.5, 15) .. controls (42.5, 15) and (46, 17) .. (48,20); 

\draw (60, 12) .. controls (58, 12.4) and (56, 13.5) .. (53.5, 15) .. controls (53.5, 15) and (50, 17) .. (48,20); 

\draw (36, 12) .. controls (40.5, 13) and (45, 13.5) .. (49.5, 13) .. controls (51, 13) and (55.5, 12.4) .. (60,12); 

\node[ver] () at (36.4, 12.1) {\small $\bullet$}; 
\node[ver] () at (48.1, 20) {\small $\bullet$};  
\node[ver] () at (60, 12) {\small $\bullet$}; 

 \node[ver] () at (36, 13.5) {$u$};  \node[ver] () at (50, 20) { $v^{\,\prime}$};  \node[ver] () at (60.5, 13.2) { $w$}; 
 
\node[ver] () at (42, 16.5) {$a^{\,\prime}$};  \node[ver] () at (54.5, 16.5) {$b^{\,\prime}$};  \node[ver] () at (48, 12) {$c$};

\end{tikzpicture} 

\caption{Hyperbolic triangles} \label{fig:triangle_h}
\end{figure}
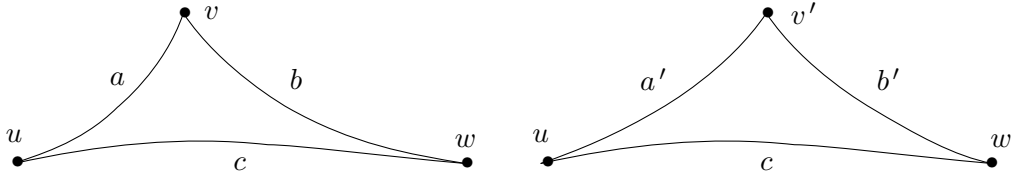 

Choose $v^{\,\prime}$ such that $a^{\prime} := \mbox{length}(uv^{\,\prime}) = (a + b)/2 = \mbox{length}(v^{\,\prime}w) =: b^{\prime}$. (The line $uw$ divides $D$ and we choose  $v^{\,\prime}$ so that $v$ and $v^{\,\prime}$ are on the same  half. See Fig. \ref{fig:triangle_h}.)  
Let $X^{\prime}$ be the $n$-gon whose vertices are those of $X$ other than $v$ together with $v^{\,\prime}$. 
Let $S$ be the area of the triangle $uvw$ and $S^{\,\prime}$ be the area of the triangle $uv^{\,\prime}w$. Let $s = (a + b + c)/2 = (a^{\prime} + b^{\,\prime}  + c)/2$. Since $(s-b)/2 < (s-a)/2$, we have 
\begin{align*} 
\tanh\frac{s-b}{2}\tanh\frac{s-a}{2}  < &\tanh^2 \frac{(s-a)/2+(s-b)/2}{2}  \\
 &= \tanh^2\frac{s-a^{\prime}}{2} 
 = \tanh\frac{s-a^{\prime}}{2}\tanh\frac{s-b^{\prime}}{2}. 
\end{align*} 
Therefore, by the hyperbolic Heron's formula (see Proposition \ref{Heron}),  $\tan^2\frac{S}{2} <  \tan^2\frac{S^{\,\prime}}{2}$.  This implies that $S< S^{\,\prime}$ and hence $\mbox{area}(X) < \mbox{area}(X^{\prime})$. This is not possible since $\mbox{peri}(X^{\prime}) = \mbox{peri}(X)=L$ and $X$ has the largest area with perimeter $L$. This proves Claim \ref{cl:h1}. 

\medskip

Let $X = v_1\cdots v_n$. For $1\leq i\leq n$, consider  the quadrilateral $Q:= v_{i-1}v_iv_{i+1}v_{i+2}$  (addition and subtraction in the subscripts are modulo $n$).  By Claim \ref{cl:h1}, $\mbox{length}(v_{i-1}v_{i}) = \mbox{length}(v_{i}v_{i+1}) = \mbox{length}(v_{i+1}v_{i+2}) = \ell$ (say). Let $\mbox{length}(v_{i-1}v_{i+2}) = k$. Let the internal angles of $Q$ be $A, B, C, D$ as in  Fig. \ref{fig:quardangle} (a).


\begin{claim} \label{cl:h3} 
$B=C$. 
\end{claim}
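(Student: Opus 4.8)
The plan is to show that the quadrilateral $Q = v_{i-1}v_iv_{i+1}v_{i+2}$ is itself area-maximizing among all quadrilaterals with the four side-lengths $\ell, \ell, \ell, k$, and then to extract the symmetry $B=C$ from the characterization of such maximizers in Propositions \ref{W1} and \ref{W3}. For the first step I would fix every vertex of $X$ except $v_i$ and $v_{i+1}$, and consider deformations that move $v_i$ and $v_{i+1}$ while keeping the three lengths $|v_{i-1}v_i| = |v_iv_{i+1}| = |v_{i+1}v_{i+2}| = \ell$ unchanged. Any such deformation preserves $\mbox{peri}(X)=L$, and since $v_{i-1}$ and $v_{i+2}$ (hence the diagonal $v_{i-1}v_{i+2}$ of length $k$) stay fixed, the part of $X$ lying outside $Q$ is unchanged; thus $\mbox{area}(X) - \mbox{area}(Q)$ is constant along the deformation. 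Because $X$ has maximal area at perimeter $L$ (and is equilateral by Claim \ref{cl:h1}), the quadrilateral $Q$ must have maximal area among all convex quadrilaterals with sides $\ell, \ell, \ell, k$ in this cyclic order.

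Granting this reduction, Proposition \ref{W1} gives the angle relation $A + C = B + D$, and then Proposition \ref{W3} shows that $Q$ is inscribed in a circle, a horocycle, or a hypercycle $\gamma$. On each of these three curves the one-parameter isometry group described in \S2 acts so that the perpendicular bisector (in the ambient geometry) of any chord is an axis of reflective symmetry of $\gamma$; in particular, the length of a chord determines the length of the arc it subtends. The three sides $v_{i-1}v_i$, $v_iv_{i+1}$, $v_{i+1}v_{i+2}$ are therefore equal chords of $\gamma$ subtending three equal arcs, say each of arc-length $\alpha$.

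I would then reflect across the perpendicular bisector $m$ of the middle chord $v_iv_{i+1}$. This reflection is an isometry preserving $\gamma$ and swapping $v_i \leftrightarrow v_{i+1}$; since $v_{i-1}$ and $v_{i+2}$ lie at equal arc-lengths $\alpha$ beyond $v_i$ and $v_{i+1}$ respectively, it also swaps $v_{i-1} \leftrightarrow v_{i+2}$. Hence $m$ is an axis of symmetry of the whole quadrilateral $Q$, interchanging the vertices $v_i$ and $v_{i+1}$ that carry the angles $B$ and $C$ (and likewise interchanging $v_{i-1}$ and $v_{i+2}$). As an isometry preserves angles, this yields $B = C$ (and $A = D$), proving Claim \ref{cl:h3}. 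I note that one could instead finish using only Proposition \ref{W1}: once maximizers with given sides are known to be unique up to isometry, reflecting $Q$ across $m$ produces a second maximizer sharing the side $v_{i-1}v_{i+2}$, which must then coincide with $Q$ and hence be symmetric; combined with $A+C=B+D$ this again forces $B=C$.

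The step I expect to require the most care is the reduction in the first paragraph: I must check that the length-preserving motions of $v_i, v_{i+1}$ really form a genuine one-parameter family and keep $Q$ a \emph{convex} quadrilateral (so that Propositions \ref{W1} and \ref{W3}, stated for convex quadrilaterals, apply) while $X$ remains a simple polygon. A secondary technical point is to verify uniformly across all three curve types --- circle, horocycle, and hypercycle --- that equal chords subtend equal arcs and that the perpendicular bisector of a chord is a symmetry axis; this follows from the transitive one-parameter isometry groups recalled in \S2, but should be stated explicitly rather than taken for granted.
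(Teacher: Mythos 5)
Your proposal is correct, and its skeleton coincides with the paper's: assume the maximizer $X$ is convex, observe that $Q$ must then maximize area among convex quadrilaterals with sides $\ell,\ell,\ell,k$, and invoke Propositions \ref{W1} and \ref{W3} to place the vertices of $Q$ on a circle, a horocycle, or a hypercycle. Where you genuinely differ is the endgame. The paper argues the three curve types separately: for a circle it joins the center $o$ to the four vertices and uses three congruent isosceles triangles together with Lemma \ref{lem1}; for a horocycle it uses congruent \emph{ideal} triangles with vertex at the ideal center (congruent via a parabolic translation, base angles equal via a reflection); for a hypercycle it projects the three equal sides onto the equidistant geodesic and compares the resulting Saccheri-type quadrilaterals, congruent via a hyperbolic translation---in every case all six base angles equal a common $\theta$, giving $B=2\theta=C$. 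You instead give a single uniform argument: the reflection in the perpendicular bisector $m$ of the middle side $v_iv_{i+1}$ preserves the inscribing curve $\gamma$, and since equal chords of $\gamma$ subtend equal arcs, it swaps $v_{i-1}$ and $v_{i+2}$, so $Q$ is symmetric and $B=C$ (and $A=D$). This is more economical, but the two facts it rests on are exactly where the trichotomy reappears: $m$ preserves $\gamma$ because $m$ passes through the center (circle), has the ideal center as an endpoint (horocycle), or is perpendicular to the axis by the Saccheri midline property (hypercycle), and the monotonicity of chord length in arc length also needs a per-case verification. So your route repackages, rather than eliminates, the paper's case analysis, while the paper's version keeps every step at the level of triangle congruence and Lemma \ref{lem1}, consistent with its stated aim of elementary, formalization-friendly steps. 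Two small cautions: the reduction is cleaner as cut-and-paste (if some $Q'$ with the same sides had larger area, glue it back along the diagonal $v_{i-1}v_{i+2}$ to contradict maximality of $X$) than as a one-parameter deformation, whose existence and convexity-preservation you would otherwise have to control; and your alternative ending via ``uniqueness of maximizers with given sides'' is not something Proposition \ref{W1} as stated provides, so the reflection argument is the one to keep.
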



Since the convex $n$-gon has more area than non-convex $n$-gon of same sides (and hence having same perimeter), we now assume that $X$ is convex. This implies that $Q$ is convex. Since  $X$ has the largest area with perimeter $L$, it follows that $Q$  has the largest area with given sides. Therefore, by Propositions \ref{W1} \& \ref{W3},  $Q$ is inscribed into a circle, a horocycle, or a hypercycle. 

\smallskip

\noindent {\sf Case 1.} $Q$ is inscribed in a circle. So, the vertices of $Q$ are on a circle. Let $o$ be the (hyperbolic) centre of this circle. Then $\mbox{length}(ov_{i-1}) =  \mbox{length}(ov_{i}) = \mbox{length}(ov_{i+1}) = \mbox{length}(ov_{i+2})$. Thus, the isosceles triangles $ov_{i-1}v_{i}$, $ov_{i}v_{i+1}$ and $ov_{i+1}v_{i+2}$ are pairwise congruent. Moreover, by Lemma \ref{lem1} the base angles of each of these isosceles triangles are equal. Therefore,  all the six angles $\angle ov_{i-1}v_{i}$, $\angle ov_{i}v_{i-1}$,  $\angle ov_{i}v_{i+1}$, $\angle ov_{i+1}v_{i}$, $\angle ov_{i+1}v_{i+2}$,  $\angle ov_{i+2}v_{i+1}$ are same, say equal to $\theta$. This implies that $B= 2\theta = C$ (see Fig. \ref{fig:quardangle} (b)).

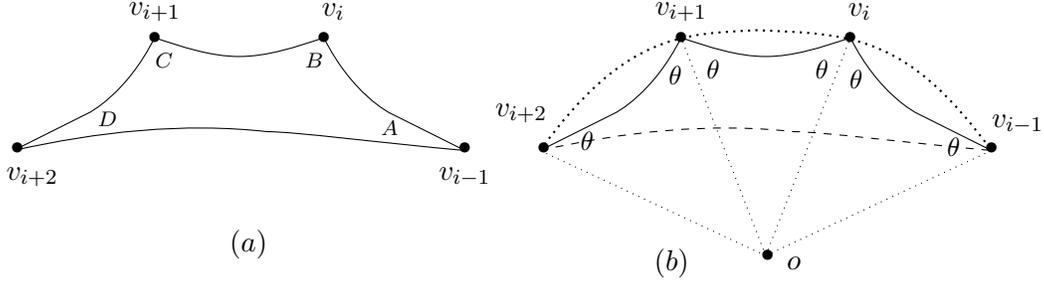
\begin{figure}[ht!]

\tikzstyle{ver}=[]
\tikzstyle{vert}=[circle, draw, fill=black!100, inner sep=0pt, minimum width=4pt]
\tikzstyle{vertex}=[circle, draw, fill=black!00, inner sep=0pt, minimum width=4pt]
\tikzstyle{edge} = [draw,thick,-]
\centering

\begin{tikzpicture}[scale=0.25]

\draw (8, 12) .. controls (10, 13) and (11, 13.5) .. (12, 14) .. controls (12, 14) and (14, 15) .. (15.5,18); 

\draw (32, 12) .. controls (30, 13) and (29, 13.5) .. (28, 14) .. controls (28, 14) and (26, 15) .. (24.5,18); 

\draw (15.5, 18) .. controls (17, 17.5) and (18.5, 17) .. (20, 17) .. controls (21.5, 17) and (23, 17.5) .. (24.5,18);

\draw (8, 12) .. controls (12.5, 13) and (17, 13.5) .. (21.5, 13) .. controls (23, 13) and (27.5, 12.4) .. (32,12); 

\node[ver] () at (8.2, 12.1) {\small $\bullet$}; 
\node[ver] () at (32, 12.1) {\small $\bullet$}; 
\node[ver] () at (15.5, 18) {\small $\bullet$}; 
\node[ver] () at (24.5, 18) {\small $\bullet$}; 

 \node[ver] () at (9, 10.6) {$v_{i+2}$};  \node[ver] () at (15.5, 19.4) { $v_{i+1}$}; 
 \node[ver] () at (25, 19.4) {$v_{i}$};  \node[ver] () at (32, 10.6) { $v_{i-1}$}; 
 
\node[ver] () at (13, 13.7) {\scriptsize $D$};  \node[ver] () at (16, 16.8) {\scriptsize  $C$};  \node[ver] () at (24, 16.8) {\scriptsize $B$};  \node[ver] () at (28, 13.2) {\scriptsize $A$};

\node[ver] () at (20.5, 7) {$(a)$}; 

\draw (36, 12) .. controls (38, 13) and (39, 13.5) .. (40, 14) .. controls (40, 14) and (42, 15) .. (43.5,18); 

\draw (60, 12) .. controls (58, 13) and (57, 13.5) .. (56, 14) .. controls (56, 14) and (54, 15) .. (52.5,18); 

\draw (43.5, 18) .. controls (45, 17.5) and (46.5, 17) .. (48, 17) .. controls (49.5, 17) and (51, 17.5) .. (52.5,18);

\draw[dashed] (36, 12) .. controls (40.5, 13) and (45, 13.5) .. (49.5, 13) .. controls (51, 13) and (55.5, 12.4) .. (60,12); 

\draw[dotted, thick] (36, 12) .. controls (40.5, 18.2) and (43.55, 18.2) .. (47.5, 18.4) .. controls (52.5, 18.2) and (55.5, 18.2) .. (60,12); 

\draw[dotted]  (48, 6.4) -- (36, 12); \draw[dotted]  (48, 6.4) -- (43.5, 18); 
 \draw[dotted]  (48, 6.4) -- (52.5, 18); \draw[dotted]  (48, 6.4) -- (60, 12);

\node[ver] () at (36.2, 12.1) {\small $\bullet$}; 
\node[ver] () at (60, 12.1) {\small $\bullet$}; 
\node[ver] () at (43.5, 18) {\small $\bullet$}; 
\node[ver] () at (52.5, 18) {\small $\bullet$}; 
\node[ver] () at (48.1, 6.4) {\small $\bullet$}; 

 \node[ver] () at (35, 14) {$v_{i+2}$};  \node[ver] () at (43.5, 19.4) { $v_{i+1}$}; 
 \node[ver] () at (53, 19.4) {$v_{i}$};  \node[ver] () at (61.4, 13.4) { $v_{i-1}$}; 
 \node[ver] () at (49.5, 6) {$o$}; 
 
 \node[ver] () at (38.5, 12.5) {\small $\theta$};  \node[ver] () at (43.2, 16) {\small  $\theta$};  \node[ver] () at (45.2, 16.3) {\small  $\theta$};  
 
 \node[ver] () at (51, 16.3) {\small $\theta$};  \node[ver] () at (52.8, 15.8) {\small $\theta$};  \node[ver] () at (58, 12.2) {\small$\theta$}; 
 
 \node[ver] () at (43, 6) {$(b)$}; 
 
\end{tikzpicture} 

\caption{$(a)$ Hyperbolic quadrilateral $Q$, $(b)$ Inscribed in a circle as in Case 1} \label{fig:quardangle}
\end{figure}

\smallskip

\noindent {\sf Case 2.} $Q$ is inscribed in a horocycle. Let $o$ be the point where the horocycle is tangent to the boundary at infinity.  The triangles $ov_{i-1}v_i, ov_iv_{i+1}$ and $ov_{i+1}v_{i+2}$ are now \textit{ideal} triangles since one vertex is at infinity, and the sides incident at $o$ are of infinite length. Since the lengths of the sides opposite to $o$ are all equal, these triangles are congruent as for any pair of such sides,  there is a parabolic isometry that fixes $o$, preserves the horocycle, and takes one side to another. Moreover, for each triangle, say $ov_{i-1}v_i$, the two base angles are equal, namely $\angle o v_{i-1} v_i = \angle o v_iv_{i-1}$, since as in the proof of Lemma \ref{lem1}, there is a reflection isometry that fixes $o$ and interchanges the vertices $v_{i-1}$ and $v_i$ lying on the horocycle (see Fig. \ref{fig:hypocycle} ).  As in Case 1, this implies that $\angle ov_{i-1}v_{i} = \angle ov_{i}v_{i-1}= \angle ov_{i}v_{i+1} = \angle ov_{i+1}v_{i}=\angle ov_{i+1}v_{i+2} = \angle ov_{i+2}v_{i+1} = \theta$ and consequently $B= 2\theta = C$.

\begin{figure}[ht!]

\tikzstyle{ver}=[]
\tikzstyle{vert}=[circle, draw, fill=black!100, inner sep=0pt, minimum width=4pt]
\tikzstyle{vertex}=[circle, draw, fill=black!00, inner sep=0pt, minimum width=4pt]
\tikzstyle{edge} = [draw,thick,-]
\centering

\begin{tikzpicture}[scale=0.17]

\draw (20,17.5) circle (220mm); 
\draw[thick] (20,8) circle (125mm); 


\draw (8.8, 13.5) .. controls (10.6, 14.5) and (11.5, 15) .. (12.6, 15.5) .. controls (12.6, 15.5) and (14.5, 16.5) .. (15.5,19.5);

\draw (31.3, 13.5) .. controls (29.5, 14.5) and (28.6, 15) .. (27.7, 15.5) .. controls (27.7, 15.5) and (26, 16.5) .. (24.5,19.5); 

\draw (15.5, 19.8) .. controls (17, 19.3) and (18.5, 18.8) .. (20, 18.8) .. controls (21.5, 18.8) and (23, 19.3) .. (24.5,19.8); 

\draw[dotted] (20, -4.5) .. controls (19.8, -0.5) and (19.5, 2.5) .. (19, 6.5) .. controls (18.4, 10.5) and (17.6, 14.5) .. (15.6,19.75);

\draw[dotted] (20, -4.5) .. controls (20.2, -0.5) and (20.5, 2.5) .. (21, 6.5) .. controls (21.6, 10.5) and (22.4, 14.5) .. (24.4,19.75); 

\draw[dotted] (20, -4.5) .. controls (21, -1.5) and (22, 1.5) .. (24, 4.5) .. controls (26, 7.5) and (28, 10.5) .. (31.3,13.5); 

\draw[dotted] (20, -4.5) .. controls (19, -1.5) and (18, 1.5) .. (16, 4.5) .. controls (14, 7.5) and (12, 10.5) .. (8.7,13.5); 

\node[ver] () at (8.8, 13.6) {\small $\bullet$}; \node[ver] () at (31.3, 13.5) {\small $\bullet$}; 
\node[ver] () at (15.65, 19.75) {\small $\bullet$}; \node[ver] () at (24.45, 19.75) {\small $\bullet$}; \node[ver] () at (20.05, - 4.5) {\small $\bullet$}; 

 \node[ver] () at (6.5, 14.6) {$v_{i+2}$};  \node[ver] () at (15.5, 21.5) { $v_{i+1}$}; 
 \node[ver] () at (25, 21.4) {$v_{i}$};  \node[ver] () at (33.8, 14.2) { $v_{i-1}$}; 
 
 \node[ver] () at (21.5, - 3.5) {$o$}; 

 \node[ver] () at (17, 16.5) {\scriptsize  $C$};  \node[ver] () at (23.1, 16.3) {\scriptsize $B$};  

 \node[ver] () at (15.6, 17.6) {\small  $\theta$};  \node[ver] () at (17.5, 18) {\small  $\theta$};  \node[ver] () at (22.8, 18) {\small $\theta$};  \node[ver] () at (24.6, 17.6) {\small $\theta$};  

\node[ver] () at (10.7, 13.2) {\small $\theta$};  \node[ver] () at (29.2, 13.2) {\small $\theta$};  

\end{tikzpicture} 

\caption{In Case 2, when the vertices on a horocycle: each edge determines an ideal triangle.} \label{fig:hypocycle}
\end{figure}
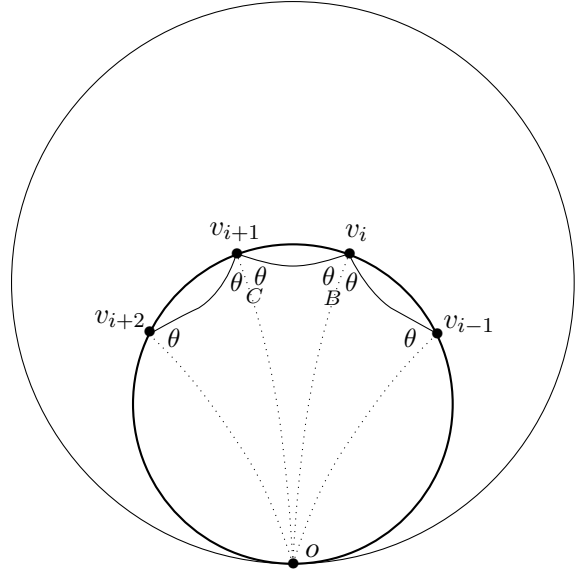 


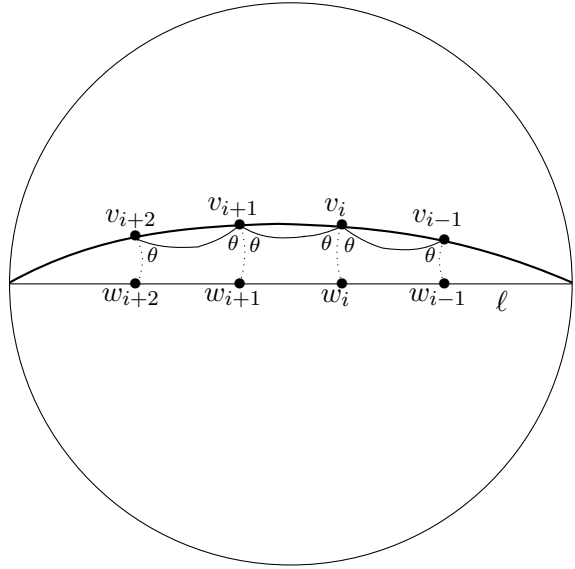
\begin{figure}[ht!]

\tikzstyle{ver}=[]
\tikzstyle{vert}=[circle, draw, fill=black!100, inner sep=0pt, minimum width=4pt]
\tikzstyle{vertex}=[circle, draw, fill=black!00, inner sep=0pt, minimum width=4pt]
\tikzstyle{edge} = [draw,thick,-]
\centering

\begin{tikzpicture}[scale=0.17]

\draw (20,17.5) circle (220mm); \draw  (-2, 17.5) -- (42, 17.5); 

\draw[thick] (-2, 17.6) .. controls (5, 21.5) and (12, 22) .. (19, 22.2) .. controls (26, 22) and (33, 21.5) .. (42,17.6);



\draw (8, 21) .. controls (9.6, 20.3) and (11.2, 20.3) .. (12.8, 20.4) .. controls (12.8, 20.5) and (14.4, 20.7) .. (16,22.1); 

\draw (32, 21) .. controls (30.4, 20) and (28.8, 20.1) .. (27.2, 20.3) .. controls (27.2, 20.3) and (25.6, 20.7) .. (24,22);

\draw (16, 22.1) .. controls (17.6, 21) and (19.2, 21) .. (21, 21.2) .. controls (20.8, 21.3) and (21.7, 21) .. (24,22);

\draw[dotted] (8, 17.5) .. controls (8.3, 18.2) and (8.4, 18.9) .. (8.4, 19.6) .. controls (8.4, 19.6) and (8.3, 20.3) .. (8,21); 

\draw[dotted] (16, 17.5) .. controls (16.3, 18.4) and (16.4, 19.3) .. (16.4, 20.2) .. controls (16.4, 20.2) and (16.3, 21.9) .. (16,22); 

\draw[dotted] (24, 17.5) .. controls (23.7, 18.4) and (23.6, 19.3) .. (23.6, 20.2) .. controls (23.6, 20.2) and (23.7, 21.9) .. (24,22); 

\draw[dotted] (32, 17.5) .. controls (31.7, 18.2) and (31.6, 18.9) .. (31.6, 19.6) .. controls (31.6, 19.6) and (31.7, 20.3) .. (32,21);

\node[ver] () at (7.85, 21.2) {\small $\bullet$}; \node[ver] () at (32, 20.9) {\small $\bullet$}; 
\node[ver] () at (16, 22.1) {\small $\bullet$}; \node[ver] () at (24, 22.1) {\small $\bullet$}; 

\node[ver] () at (7.85, 17.5) {\small $\bullet$}; \node[ver] () at (32, 17.5) {\small $\bullet$}; 
\node[ver] () at (16, 17.5) {\small $\bullet$}; \node[ver] () at (24, 17.5) {\small $\bullet$};

 \node[ver] () at (7.5, 22.6) {$v_{i+2}$};  \node[ver] () at (15.5, 23.5) { $v_{i+1}$}; 
 \node[ver] () at (23.5, 23.5) {$v_{i}$};  \node[ver] () at (31.5, 22.6) { $v_{i-1}$}; 
 
  \node[ver] () at (7.5, 16.4) {$w_{i+2}$};  \node[ver] () at (15.5, 16.4) { $w_{i+1}$}; 
 \node[ver] () at (23.5, 16.4) {$w_{i}$};  \node[ver] () at (31.5, 16.4) { $w_{i-1}$}; 
 \node[ver] () at (36.5, 16.2) { $\ell$};


 \node[ver] () at (9.2, 19.8) {\scriptsize $\theta$};  \node[ver] () at (15.5, 20.7) {\scriptsize  $\theta$};  \node[ver] () at (17.2, 20.6) {\scriptsize  $\theta$};  \node[ver] () at (22.8, 20.7) {\scriptsize $\theta$};  \node[ver] () at (24.6, 20.6) {\scriptsize $\theta$};  
 \node[ver] () at (30.9, 19.8) {\scriptsize $\theta$};

\end{tikzpicture} 

\caption{In Case 3, when the vertices are on a hypercycle (shown in bold), each edge together with its projection to the equidistant geodesic $\ell$ defines a quadrilateral.} \label{fig:hypercycle}
\end{figure}


\noindent {\sf Case 3.} $Q$ is inscribed in a hypercycle. Recall from Subsection 2.1 that there is a geodesic line that is equidistant from the hypercycle, we denote it by $\ell$. This time, for each of the three sides $v_{i-1}v_i, v_iv_{i+1}$ and $v_{i+1}v_{i+2}$, consider the hyperbolic quadrilateral formed by the side,  its nearest-point projection to $\ell$, and the two distance-minimizing arcs from the endpoints of the side to $\ell$ (see Fig. \ref{fig:hypercycle}).  If the vertices on the hypercycle are $v$ and $v^\prime$, we denote the resulting quadrilateral by $vv^\prime w^\prime w$ where $w$ and $w^\prime$ lie on $\ell$ and are the nearest-point projections of $v$ and $v^\prime$ respectively; note that the angle $\angle vww^\prime = \angle v^\prime w^\prime w = \pi/2$ since the arcs $vw$ and $v^\prime w^\prime$ minimize distances to $\ell$. For such a quadrilateral, there is a reflection isometry that simultaneously interchanges the pairs of vertices $v,v^\prime$ and $w, w^\prime$; this implies that $\angle wvv^\prime = \angle w^\prime v^\prime v$. Moreover, since there is a  hyperbolic isometry that acts by translation along $\ell$ and the equidistant horocycle,  any two such quadrilaterals determined by pairs of vertices on the hypercycle are congruent if the lengths of those sides are the same.  In particular, if $w_{i-1}, w_i, w_{i+1}$ and $w_{i+2}$ are the nearest-point projections of $v_{i-1}, v_i, v_{i+1}, v_{i+2}$ respectively, then the angles $\angle w_{i-1} v_{i-1} v_i = \angle w_i v_i v_{i-1} = \angle w_{i} v_{i} v_{i+1} = \angle w_{i+1} v_{i+1} v_{i}   = \angle w_{i+1} v_{i+1} v_{i+2} = \angle w_{i+2} v_{i+2} v_{i+1} = \theta$ which implies the angles $B=2 \theta = C$.

\medskip 

\noindent This proves Claim \ref{cl:h3}.

\medskip

By Claim \ref{cl:h3}, the interior angles of $X$ at the vertices $v_{i}$ and $v_{i+1}$ are same. Since $i$ is arbitrary, it follows that  the interior angles of $X$ at the vertices $v_{i}$ and $v_{i+1}$ are same for all $i = 1,  \dots, n$. (Here $v_{n+1}=v_1$.) This implies that $X$ is equiangular. Therefore, by Claim \ref{cl:h1}, $X$ is regular. 

\bigskip

 \noindent {\bf Spherical case:} 
Let $Y$ be a spherical $n$-gon in an open hemisphere $U$ of perimeter $L$ and of largest area among all spherical $n$-gons in $U$ of perimeters $L$. 

By similar arguments as in the proof of Claim \ref{cl:h1} (using Lhuilier's formula in place of Heron's formula and considering Fig. \ref{fig:triangle_s} in place of Fig. \ref{fig:triangle_h}), it follows that  $Y$ is equilateral.

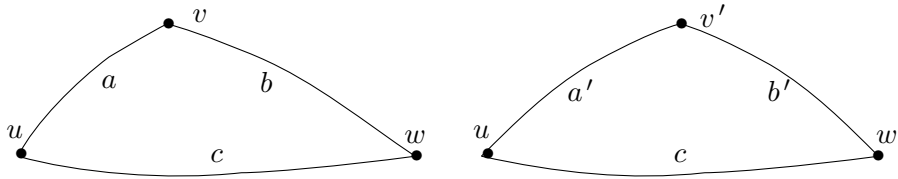
\begin{figure}[ht!]

\tikzstyle{ver}=[]
\tikzstyle{vert}=[circle, draw, fill=black!100, inner sep=0pt, minimum width=4pt]
\tikzstyle{vertex}=[circle, draw, fill=black!00, inner sep=0pt, minimum width=4pt]
\tikzstyle{edge} = [draw,thick,-]
\centering

\begin{tikzpicture}[scale=0.22]

\draw (8, 12) .. controls (9, 14) and (11.5, 16.5) .. (13.5, 18) .. controls (13.5, 18) and (16, 19.5) .. (17,20); 

\draw (32, 12) .. controls (29, 14) and (26, 16.5) .. (22.5, 18) .. controls (22.5, 18) and (19, 19.5) .. (17,20); 

\draw (8, 12) .. controls (11.5, 11) and (17, 10.5) .. (21.5, 11) .. controls (23, 11) and (27.5, 11.4) .. (32,12); 

\node[ver] () at (8.2, 12.1) {\small $\bullet$}; 
\node[ver] () at (17.1, 20) {\small $\bullet$};  
\node[ver] () at (32.1, 12) {\small $\bullet$}; 

 \node[ver] () at (7.8, 13.5) {$u$};  \node[ver] () at (19, 20.5) { $v$};  \node[ver] () at (32, 13.2) { $w$}; 
 
 \node[ver] () at (13.5, 16.5) {$a$};  \node[ver] () at (23, 16.5) {$b$};  
 \node[ver] () at (20, 12) {$c$}; 


\draw (36, 12) .. controls (38, 14) and (40, 16) .. (42.5, 17.5) .. controls (42.5, 17.5) and (46, 19.5) .. (48,20); 

\draw (60, 12) .. controls (58, 14) and (56, 16) .. (53.5, 17.5) .. controls (53.5, 17.5) and (50, 19.5) .. (48,20); 

\draw (36, 12) .. controls (40.5, 11) and (45, 10.5) .. (49.5, 11) .. controls (51, 11) and (55.5, 11.4) .. (60,12); 

\node[ver] () at (36.4, 12.1) {\small $\bullet$}; 
\node[ver] () at (48.1, 20) {\small $\bullet$};  
\node[ver] () at (60, 12) {\small $\bullet$}; 

 \node[ver] () at (36, 13.5) {$u$};  \node[ver] () at (50, 20.5) { $v^{\,\prime}$};  \node[ver] () at (60.5, 13.2) { $w$}; 
 
\node[ver] () at (42, 16) {$a^{\,\prime}$};  \node[ver] () at (54, 16) {$b^{\,\prime}$};  \node[ver] () at (48, 12) {$c$}; 

\end{tikzpicture} 

\caption{Spherical triangles} \label{fig:triangle_s}
\end{figure}


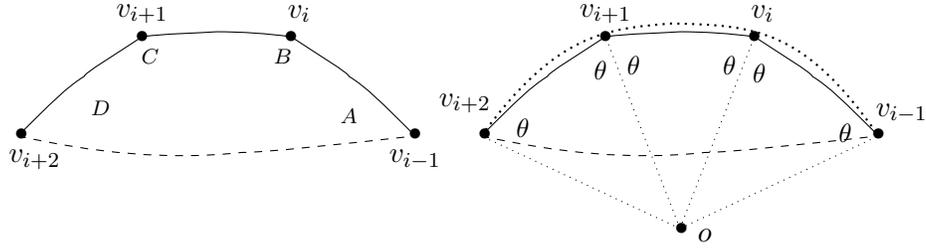
\begin{figure}[ht!]

\tikzstyle{ver}=[]
\tikzstyle{vert}=[circle, draw, fill=black!100, inner sep=0pt, minimum width=4pt]
\tikzstyle{vertex}=[circle, draw, fill=black!00, inner sep=0pt, minimum width=4pt]
\tikzstyle{edge} = [draw,thick,-]
\centering

\begin{tikzpicture}[scale=0.22]

\draw (8, 12) .. controls (10, 14) and (11, 15) .. (12, 15.6) .. controls (12, 15.8) and (14, 17) .. (15.5,18); 

\draw (32, 12) .. controls (30, 14) and (29, 15) .. (28, 15.6) .. controls (28, 15.8) and (26, 17) .. (24.5,18); 

\draw (15.5, 18) .. controls (17, 18.2) and (18.5, 18.2) .. (20, 18.3) .. controls (21.5, 18.3) and (23, 18.2) .. (24.5,18); 

\draw[dashed] (8, 12) .. controls (12.5, 11) and (17, 10.5) .. (21.5, 11) .. controls (23, 11) and (27.5, 11.5) .. (32,12); 

\node[ver] () at (8.2, 12.1) {\small $\bullet$}; 
\node[ver] () at (32, 12.1) {\small $\bullet$}; 
\node[ver] () at (15.5, 18) {\small $\bullet$}; 
\node[ver] () at (24.5, 18) {\small $\bullet$}; 

 \node[ver] () at (9, 10.6) {$v_{i+2}$};  \node[ver] () at (15.5, 19.4) { $v_{i+1}$}; 
 \node[ver] () at (25, 19.4) {$v_{i}$};  \node[ver] () at (32, 10.6) { $v_{i-1}$}; 
 
\node[ver] () at (13, 13.7) {\scriptsize $D$};  \node[ver] () at (16, 16.8) {\scriptsize  $C$};  \node[ver] () at (24, 16.8) {\scriptsize $B$};  \node[ver] () at (28, 13.2) {\scriptsize $A$}; 


\draw (36, 12) .. controls (38, 14) and (39, 15) .. (40, 15.6) .. controls (40, 15.8) and (42, 17) .. (43.5,18); 

\draw (60, 12) .. controls (58, 14) and (57, 15) .. (56, 15.6) .. controls (56, 15.8) and (54, 17) .. (52.5,18); 

\draw (43.5, 18) .. controls (45, 18.2) and (46.5, 18.2) .. (48, 18.3) .. controls (49.5, 18.3) and (51, 18.2) .. (52.5,18);

\draw[dashed] (36, 12) .. controls (40.5, 11) and (45, 10.5) .. (49.5, 11) .. controls (51, 11) and (55.5, 11.5) .. (60,12); 

\draw[dotted, thick] (36, 12) .. controls (40.5, 18.2) and (43.55, 18.5) .. (47.5, 18.8) .. controls (52.5, 18.7) and (55.5, 18.5) .. (60,12); 

\draw[dotted]  (48, 6.4) -- (36, 12); \draw[dotted]  (48, 6.4) -- (43.5, 18); 
 \draw[dotted]  (48, 6.4) -- (52.5, 18); \draw[dotted]  (48, 6.4) -- (60, 12);

\node[ver] () at (36.2, 12.1) {\small $\bullet$}; 
\node[ver] () at (60, 12.1) {\small $\bullet$}; 
\node[ver] () at (43.5, 18) {\small $\bullet$}; 
\node[ver] () at (52.5, 18) {\small $\bullet$}; 
\node[ver] () at (48.1, 6.4) {\small $\bullet$}; 

 \node[ver] () at (35, 14) {$v_{i+2}$};  \node[ver] () at (43.5, 19.4) { $v_{i+1}$}; 
 \node[ver] () at (53, 19.4) {$v_{i}$};  \node[ver] () at (61.4, 13.4) { $v_{i-1}$}; 
 \node[ver] () at (49.5, 6) {$o$}; 
 
 \node[ver] () at (38.5, 12.5) {\small $\theta$};  \node[ver] () at (43.2, 16) {\small  $\theta$};  \node[ver] () at (45.2, 16.3) {\small  $\theta$};  
 
 \node[ver] () at (51, 16.3) {\small $\theta$};  \node[ver] () at (52.8, 15.8) {\small $\theta$};  \node[ver] () at (58, 12.2) {\small$\theta$}; 
 
\end{tikzpicture} 
\vspace{-4mm}
\caption{Spherical quadrilaterals} \label{fig:quardangle_s}
\end{figure} 


Again, by similar arguments as in the hyperbolic case (using Propositions \ref{W1} \& \ref{W2})
in place of Propositions \ref{W1} \& \ref{W3}), $Y$ is equiangular.  (In this case, we have to consider spherical quadrilateral $Q$ in place of a hyperbolic quadrilateral and we have to consider only Case 1, namely, $Q$ is inscribed in a circle.  See Fig. \ref{fig:quardangle_s}.) 
Since $Y$ is equilateral, this implies that  $Y$ is regular. This completes the proof. 
\end{proof}


 {\footnotesize

}

\end{document}